\renewcommand{\geq}{\geqslant}
\renewcommand{\leq}{\leqslant}
\newcommand{\lip}{{\mathrm{lip}}_0}
\newcommand{\Lip}{{\mathrm{Lip}}_0}
\newcommand{\SA}{\operatorname{SNA}}
\newtheorem{theorem}{Theorem}[section]
\newtheorem*{maintheorem}{Main Theorem}
\newtheorem{lemma}[theorem]{Lemma}
\theoremstyle{definition}
\theoremstyle{remark}
\newtheorem{remark}[theorem]{Remark}
\newtheorem*{question}{Question}
\numberwithin{equation}{section}
\def\fnote#1{\footnote}
\def\ignora#1{}
\def\n3#1{\left\vert  \! \left\vert \! \left\vert \, #1 \, \right\vert \!
  \right\vert \! \right\vert }
\begin{document}

\title{ Infinite dimensional spaces in the set of strongly norm-attaining Lipschitz maps }

\author[Avil\'es]{Antonio Avil\'es}
\address[Avil\'es]{Universidad de Murcia, Departamento de Matem\'{a}ticas, Campus de Espinardo 30100 Murcia, Spain
	\newline
	\href{https://orcid.org/0000-0003-0291-3113}{ORCID: \texttt{0000-0003-0291-3113} } }
\email{\texttt{avileslo@um.es}}

\author[Mart\'inez-Cervantes]{Gonzalo Mart\'inez-Cervantes}
\address[Mart\'inez-Cervantes]{Universidad de Alicante, Departamento de Matem\'{a}ticas, Facultad de Ciencias, 03080 Alicante, Spain
	\newline
	\href{http://orcid.org/0000-0002-5927-5215}{ORCID: \texttt{0000-0002-5927-5215} } }	
\email{gonzalo.martinez@ua.es}

\author[Rueda Zoca]{Abraham Rueda Zoca}
\address[Rueda Zoca]{Universidad de Murcia, Departamento de Matem\'{a}ticas, Campus de Espinardo 30100 Murcia, Spain
	\newline
	\href{https://orcid.org/0000-0003-0718-1353}{ORCID: \texttt{0000-0003-0718-1353} }}
\email{\texttt{abraham.rueda@um.es}}
\urladdr{\url{https://arzenglish.wordpress.com}}

\author[Tradacete]{Pedro Tradacete}
\address[Tradacete]{Instituto de Ciencias Matem\'aticas (CSIC-UAM-UC3M-UCM)\\
	Consejo Superior de Investigaciones Cient\'ificas\\
	C/ Nicol\'as Cabrera, 13--15, Campus de Cantoblanco UAM\\
	28049 Madrid, Spain.
	\newline
	\href{http://orcid.org/0000-0001-7759-3068}{ORCID: \texttt{0000-0001-7759-3068} }}
\email{pedro.tradacete@icmat.es}

\subjclass[2020]{46B04, 46B20, 54E50}

\keywords{Strong norm attainment; space of Lipschitz functions; linear subspaces}

\maketitle

\markboth{AVIL\'ES, MART\'INEZ-CERVANTES, RUEDA ZOCA AND TRADACETE}{INFINITE DIMENSIONAL SPACES IN $\SA(M)$}

\begin{abstract}
We prove that if $M$ is an infinite complete metric space then the set of strongly norm-attaining Lipschitz functions $\SA(M)$ contains a linear subspace isomorphic to $c_0$. This solves an open question posed by V. Kadets and O. Rold\'an.
\end{abstract}

\section{Introduction}
Let $M$ be a metric space with a distinguished point $0 \in M$.
The couple $(M,0)$ is commonly called a \emph{pointed metric space}.
Given a pointed metric space $M$, we will denote by $\Lip(M)$ the Banach space of all real-valued Lipschitz functions on $M$ which vanish at $0$ under the standard Lipschitz norm
$$\Vert f\Vert:=\sup\left\{ \frac{\vert f(x)-f(y)\vert}{d(x,y)}\ :\ x,y\in M, x\neq y \right\} .$$
Observe that the requirement that every Lipschitz function in $\Lip(M)$ vanishes at a common point guarantees that the unique constant function that $\Lip(M)$ contains is the zero function and, consequently, the best Lipschitz constant of a function defines a norm on $\Lip(M)$. Moreover, since every Lipschitz function on a metric space $M$ extends uniquely to its completion, there is no loss of generality in assuming that $M$ is complete.

There is a natural concept of norm-attainment in $\Lip(M)$. Given $f\in \Lip(M)$, we say that $f$ \textit{strongly attains its norm} if the above supremum is actually a maximum, i.e. if there are two different points $x,y\in M$ such that $\frac{f(x)-f(y)}{d(x,y)}=\Vert f\Vert$. We denote by $\SA(M)$ the set of those Lipschitz functions which strongly attain their norm. Even though it seems the most natural sense of norm attainmnet, one reason for saying ``strong norm-attainment'' is that, thanks to the rich structure of the space $\Lip(M)$, many concepts of norm-attainment for a Lipschitz function have been considered in the literature (see \cite{ccm2020,god2015,kms2016} for other notions of norm-attainment). On the other hand, the reason for the terminology ``strong'' is that it is a very restrictive concept as we will explain below.

The origin of the strong norm-attainment goes back to the papers \cite{god2015,kms2016}. On the one hand, in \cite{kms2016} negative examples are provided to illustrate that this concept is very restrictive. To mention one example, in \cite[Theorem 2.3]{kms2016} a Lipschitz function $f\in \Lip([0,1])$ is given with the property that $d(f,\SA([0,1])\geq \frac{1}{2}$. On the other hand, in \cite[Section 5]{god2015} examples of metric spaces $M$ are provided for which the set $\SA(M)$ is dense in $\Lip(M)$. Since then, a vast literature has appeared dealing with the problem of when the set of vector-valued strongly norm-attaining Lipschitz functions (defined in the natural way) is dense in the space of all Lipschitz functions (see \cite{ccgmr2019,chiclana,cgmr2021,cm2019,cm2021,gppr2018,gpr2017,jmr2022}).

A different study of the size of $\SA(M)$ has been recently performed in \cite{karo} where, instead of studying when $\SA(M)$ is dense, the authors study when $\SA(M)$ contains linear subspaces. At first glance, one could expect that, since the strong-norm attainment is a restrictive concept and $\SA(M)$ can be small, the set $\SA(M)$ could fail to contain linear subspaces of dimension bigger than $2$ for some metric spaces $M$. However, in \cite{karo} it is proved that, for every infinite metric space $M$, the set $\SA(M)$ contains, for every $n\in\mathbb N$, a vector subspace isomorphic to $\ell_1^n$.

A natural question now (explicitly posed in \cite[Question 1]{karo}) is whether $\SA(M)$ contains an infinite-dimensional subspace when $M$ is infinite. From the results of \cite[Section 3]{karo} it is reasonable to conjecture that, if we look for an infinite-dimensional subspace of $\Lip(M)$ contained in $\SA(M)$, the natural candidate would be $c_0$. In fact, in \cite[Theorem 3]{karo} it is proved that if $M$ is $\sigma$-precompact, then all the subspaces that $\SA(M)$ may contain must be separable and isomorphic to polyhedral spaces. Observe also that, given any infinite metric space $M$, the space $\Lip(M)$ always contains an isomorphic copy of $c_0$ \cite{cdw2015}. Finally, observe that in \cite[Example 1]{karo} it is proved that $\SA(M)$ contains a linear isometric copy of $c_0$ if $M$ contains an isometric copy of $[0,1]$. For $M=[0,1]$ this says that $\SA(M)$ may fail to be dense in $\Lip(M)$ (and hence being somehow ``small''), but it contains an isometric copy of $c_0$, in particular it may contain infinite dimensional subspaces. All these results motivated the authors of \cite{karo} to pose the following question.

\begin{question} \cite[Question 2]{karo} Is it true that for every infinite complete pointed metric space $M$ the corresponding $\SA(M)$ contains an isomorphic copy of $c_0$?
\end{question}

The aim of this paper is to give a positive answer to this question. Consequently, the main theorem of the paper is the following.

\begin{maintheorem}\label{theorem:maintheorem}
Let $M$ be an infinite complete pointed metric  space. Then $\SA(M)$ contains an isomorphic copy of $c_0$.
\end{maintheorem}

After introducing some preliminary results and the necessary notation in Section \ref{section:notation}, we prove the Main Theorem in Section \ref{section:main}. For convenience let us sketch the strategy of the proof here: we begin by distinguishing whether the accumulation of the given metric space is infinite or finite (possibly empty); the former case will be considered in Theorem \ref{theo:inficluster}, while for the latter we consider first the situation when the metric space is countable and compact by means of Theorem \ref{theo:proper}, and when that is not the case, then up to removing a finite collection of balls we have to deal with a discrete metric space. In this situation, Theorem \ref{theo:discrenoudiscre} allows us to handle the case of non-uniformly discrete spaces, whereas the uniformly discrete case will be solved by distinguishing several possibilities on the structure of the underlying metric space and with the help of Ramsey's theorem.

\section{Notation and preliminary results}\label{section:notation}

All the metric spaces considered in this text will be assumed to be complete. Given a metric space $M$, we denote by $B(x,r)=\{y\in M: d(x,y)<r\}$ and $\overline{B}(x,r):=\{y\in M: d(x,y)\leq r\}$. Moreover, for any pair of subsets $A,B\subseteq M$ we write
$d(A,B)$ to denote the number
$$d(A,B)=\inf\{d(x,y):x\in A,~y\in B\}.$$

Observe that, if we select different distinguished points $0$ and $0'$ in a metric space $M$, then the mapping $f\longmapsto f-f(0')$ establishes an onto linear isometry from the space $\Lip(M)$ onto the space $\operatorname{Lip}_{0'}(M)$. Because of this, the choice of the origin is irrelevant and we will consider that all the metric spaces in the text are pointed without explicit mention.

Given a metric space $M$, we will say that $\SA(M)$ contains an isomorphic copy of $c_0$ (or similar sentences) if we can find a linear operator $T\colon c_0\longrightarrow \Lip(M)$ which is an isomorphism onto $T(c_0)$ and such  that $T(c_0)\subseteq \SA(M)$. The sequence $\{e_n\}$ always denotes the canonical basis of $c_0$.

In order to prove that a sequence of Lipschitz functions is equivalent to the $c_0$ basis the following lemma will be extremely useful. The proof follows directly from that of \cite[Lemma 1.5]{ccgmr2019}.

\begin{lemma}\label{lemma:disjsupportc0}
Let $M$ be a metric space and let $\{f_n\}$ be a sequence in the unit ball of $\Lip(M)$. Write, for every $n\in\mathbb N$, $U_n:=\{x\in M: f_n(x)\neq 0\}$. If $U_n\cap U_m=\emptyset$ for every $n\neq m$, then the operator $T:c_0\longrightarrow \Lip(M)$ given by $T(e_n)=f_n$ is bounded with $\Vert T\Vert\leq 2$.
\end{lemma}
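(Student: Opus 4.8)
The plan is to prove this exactly as Lemma 1.5 of \cite{ccgmr2019} is proved, adapted to the present (scalar-valued) setting.

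First I would show that for any finitely supported scalar sequence $(a_n)$, the Lipschitz function $f = \sum_n a_n f_n$ (which is well-defined because the sets $U_n$ are pairwise disjoint, so the sum is locally a single term on each point where it is nonzero, with the caveat that one must check Lipschitzness globally) satisfies $\norm{f} \leq 2 \sup_n \abs{a_n}$. This is the only real content. To estimate $\frac{\abs{f(x)-f(y)}}{d(x,y)}$ for $x \neq y$ in $M$, I would split into cases according to where $x$ and $y$ fall among the sets $U_n$. If $x,y$ lie in the same $U_n$ (or one of them is outside all $U_m$ and the other in $U_n$), then $f(x)-f(y) = a_n(f_n(x)-f_n(y))$ and the quotient is bounded by $\abs{a_n}\norm{f_n} \leq \sup_n\abs{a_n}$. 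If $x \in U_n$ and $y \in U_m$ with $n \neq m$, or if both are outside all the $U_k$, then one inserts the value $0$: since $f_n$ vanishes outside $U_n$, we have $f_n(x) = f_n(x) - f_n(z)$ for a suitable point where $f_n$ vanishes, and this is where the factor $2$ comes in — more precisely, I would bound $\abs{f(x)-f(y)} \leq \abs{f(x)} + \abs{f(y)} = \abs{a_n}\,\abs{f_n(x)} + \abs{a_m}\,\abs{f_m(y)}$, then use that $f_n$ is $1$-Lipschitz and vanishes at some point of $M$ (at $0$, and also one may use any point outside $U_n$, in particular $y \notin U_n$) to write $\abs{f_n(x)} = \abs{f_n(x)-f_n(y)} \leq d(x,y)$, and similarly $\abs{f_m(y)} \leq d(x,y)$, giving $\abs{f(x)-f(y)} \leq (\abs{a_n}+\abs{a_m})\,d(x,y) \leq 2\sup_k\abs{a_k}\,d(x,y)$.

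Having established $\norm{T(\sum a_n e_n)} = \norm{\sum a_n f_n} \leq 2\sup_n\abs{a_n}$ on finitely supported sequences, I would then note that $T$ extends by density and continuity to all of $c_0$ (using completeness of $\Lip(M)$), and the same bound $\norm{T} \leq 2$ persists. This gives boundedness of $T$ with the stated norm estimate, which is the full claim of the lemma. Note that the lemma as stated only asserts boundedness of $T$ with $\norm{T}\le 2$; it does not claim that $T$ is an isomorphism onto its range, so no lower estimate is needed here (that will be supplied separately whenever the lemma is applied, typically because the $f_n$ are chosen with $\norm{f_n}$ bounded below and with good enough separation of supports).

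The only point requiring a little care — and hence the main (mild) obstacle — is the very first one: verifying that $\sum_n a_n f_n$ is genuinely a Lipschitz function with the claimed constant, i.e. handling the ``mixed'' case where $x$ and $y$ sit in different pieces $U_n, U_m$. But this is precisely resolved by the insertion-of-zero trick above, which is exactly the argument in \cite[Lemma 1.5]{ccgmr2019}; everything else is routine. Since the paper explicitly says the proof follows directly from that reference, I would in the final write-up simply point to it, perhaps recording the one-line mixed-case estimate for the reader's convenience.
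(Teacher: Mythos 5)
Your argument is correct and is exactly the standard disjoint-supports/insertion-of-zero estimate that the paper itself invokes by deferring to \cite[Lemma 1.5]{ccgmr2019}: the mixed case $x\in U_n$, $y\in U_m$ is handled by noting $f_n(y)=f_m(x)=0$, giving $|f(x)-f(y)|\le(|a_n|+|a_m|)d(x,y)\le 2\sup_k|a_k|\,d(x,y)$, and the extension from finitely supported sequences to all of $c_0$ by density and completeness of $\Lip(M)$ is routine. No gaps; this matches the intended proof.
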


Given a metric space $M$, we will denote by $M'$ the set of cluster points of $M$. The space $M$ is said to be \textit{discrete} if $M'=\emptyset$; in other words, if for every $x\in M$ there exists $r_x>0$ such that $B(x,r_x)=\{x\}$. If a common radius $r$ can be found, i.e. if there exists $r>0$ such that $d(x,y)\geq r$ whenever $x\neq y$, then $M$ is said to be \textit{uniformly discrete}.
Given $\varepsilon>0$, we say that a sequence $\{x_n\}$ in $M$ is $\varepsilon$-separated if $d(x_n,x_m)>\varepsilon$ for every $n\neq m$. We will make use of the well-known fact that every sequence in a metric space contains either a Cauchy subsequence or an $\varepsilon$-separated subequence for some $\varepsilon>0$. As a consquence every infinite metric space contains an infinite sequence of pairwise disjoint balls.

The following lemma is proved in \cite[Lemma 4.3]{ccgmr2019} and will be the key ingredient when dealing with discrete metric spaces which are not uniformly discrete.

\begin{lemma}\label{lemma:noudiscretejfa}
Let $M$ be a complete metric space. Assume that $M$ is discrete but not uniformly discrete. Then, for every $k\geq 2$ and every $\varepsilon>0$, there exist $x, y\in M$ such that $0<d(x,y)\leq\varepsilon$ and the set $M\setminus \overline{B}(x,k\,d(x,y))$ is not uniformly discrete.
\end{lemma}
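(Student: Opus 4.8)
The plan is to exploit the failure of uniform discreteness to produce a sequence of ever-shorter pairs whose \emph{first} endpoints are uniformly spread apart, and then to take one of these pairs as the centre $x$: because the radius $k\,d(x,y)$ can be forced to be much smaller than the separation of the first endpoints, all but finitely many of the remaining short pairs survive inside $M\setminus\overline{B}(x,k\,d(x,y))$, which is therefore not uniformly discrete.

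First I would use that $M$ is not uniformly discrete to pick, for each $n$, distinct points $a_n,b_n\in M$ with $d(a_n,b_n)<1/n$, so that $d(a_n,b_n)\to 0$. Next I would normalize the positions of the points $a_n$. Applying to $\{a_n\}$ the fact recalled above that every sequence in a metric space has either a Cauchy subsequence or an $\eta$-separated subsequence, I claim the Cauchy alternative is impossible: a Cauchy subsequence of $\{a_n\}$ would converge, by completeness of $M$, to some $a\in M$, which is an isolated point because $M$ is discrete; hence that subsequence is eventually constant equal to $a$, and then the companions $b_n$ are points distinct from $a$ converging to $a$, making $a$ a cluster point of $M$ and contradicting discreteness. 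So, after passing to a subsequence and relabelling, I may assume that $\{a_n\}$ is $\eta$-separated for some $\eta>0$, while still $a_n\neq b_n$ and $d(a_n,b_n)\to 0$.

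With this normalization in hand, given $k\geq 2$ and $\varepsilon>0$ I would choose $n_0$ so large that $d(a_n,b_n)<\min\{\varepsilon,\eta/(3k)\}$ for all $n\geq n_0$, and set $x:=a_{n_0}$, $y:=b_{n_0}$, so that $0<d(x,y)\leq\varepsilon$ and $r:=k\,d(x,y)<\eta/3$. For each $m>n_0$ the triangle inequality gives $d(a_m,x)>\eta>r$ and $d(b_m,x)\geq d(a_m,x)-d(a_m,b_m)>\eta-\eta/(3k)>r$, so both $a_m$ and $b_m$ lie in $M\setminus\overline{B}(x,r)$. Consequently $M\setminus\overline{B}(x,k\,d(x,y))$ contains all the pairs $\{a_m,b_m\}$ with $m>n_0$, that is, pairs of distinct points at mutual distance tending to $0$; by definition it is not uniformly discrete, which is exactly the conclusion.

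The only step I expect to require real care is the normalization, i.e.\ ruling out that $\{a_n\}$ clusters: this is where completeness of $M$ is genuinely used --- the conclusion can fail for discrete non-complete spaces, for instance $\{1/n:n\geq 1\}$ --- and one must also remember to discard the degenerate possibility that $\{a_n\}$ is eventually constant, which is exactly what the companion points $b_n$ rule out. Once the $a_n$ are $\eta$-separated, everything else is an elementary ball estimate plus the definition of uniform discreteness.
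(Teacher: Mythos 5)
Your proof is correct. Note that the paper itself does not prove this lemma: it is quoted from Lemma~4.3 of the reference \cite{ccgmr2019}, so there is no internal argument to compare against, and your write-up supplies a valid self-contained proof. The three steps --- extracting pairs $a_n\neq b_n$ with $d(a_n,b_n)\to 0$, using the Cauchy-versus-$\eta$-separated dichotomy together with completeness and discreteness (via the companion points $b_n$) to force the separated alternative, and the final estimate showing that all pairs $\{a_m,b_m\}$ with $m>n_0$ survive outside $\overline{B}(x,k\,d(x,y))$ --- are all sound.

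One small correction to your closing remark: $\{1/n:n\geq 1\}$ is \emph{not} an example where the conclusion fails without completeness. There the consecutive gaps are of order $1/n^{2}$ while the distance from $1/n$ to the missing limit point $0$ is of order $1/n$, so $\overline{B}\bigl(1/n,\,k/(n(n+1))\bigr)$ swallows only finitely many neighbours and the complement still contains a tail of arbitrarily close pairs; the conclusion of the lemma in fact holds for that space. A genuine counterexample is $M=\{2^{-n}:n\geq 0\}$ with $k=2$: for any pair $x=2^{-n}$, $y=2^{-m}$ one checks $2\,d(x,y)\geq 2^{-n}=d(x,0)$, so $\overline{B}(x,2\,d(x,y))$ contains the entire tail and the complement is finite, hence uniformly discrete. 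This does not affect your proof, which uses completeness exactly where you say it does.
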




\section{Main results}\label{section:main}

We start with a useful lemma which will cover several situations when $\SA(M)$ contains an isometric copy of $c_0$.

\begin{lemma}\label{lemma:technicalinficluster}
Let $M$ be a metric space. Assume that there is a sequence $B(x_n,R_n)$ $n\in\mathbb N$ of balls satisfying the following conditions:
\begin{enumerate}
    \item $d(B(x_i,R_i),B(x_j,R_j))>0$ for every $n\neq m$,
    \item $\frac{R_i+R_j}{d(B(x_i,R_i),B(x_j,R_j))}<\frac{1}{2}$ for every $i\neq j$, and
    \item for every $n\in\mathbb N$ there is $y_n\in B(x_n,R_n)\setminus\{x_n\}$ such that $d(x_n,y_n)<d(y_n, M\setminus B(x_n,R_n))$.
\end{enumerate}
Then, for every $n\in\mathbb N$, there is a norm-one Lipschitz function $f_n$ with $\frac{f_n(y_n)-f_n(x_n)}{d(x_n,y_n)}=1$, so that $\{f_n\}$ is isometric to the $c_0$-basis and $\SA(M)$ contains $\overline{\operatorname{span}}\{f_n\}$.
\end{lemma}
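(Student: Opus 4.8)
The plan is to construct each $f_n$ as a ``bump'' function supported on the ball $B(x_n,R_n)$ and take their closed linear span. Concretely, I would first fix $n$ and work inside a single ball. Setting $r_n := d(x_n,y_n)$ and $\rho_n := d(y_n, M\setminus B(x_n,R_n))$, condition (3) guarantees $r_n < \rho_n$. I would define
\[
f_n(z) := \max\bigl\{0,\ \min\{r_n,\ \rho_n - d(y_n,z)\}\bigr\} - r_n,
\]
so that $f_n(y_n) = \min\{r_n,\rho_n\} - r_n$; wait, that would be $0$, so instead I would shift: take $g_n(z) := \min\{r_n, (\rho_n - d(y_n,z))^+\}$, which equals $r_n$ at $z = y_n$ (since $\rho_n - 0 = \rho_n > r_n$) and equals $0$ whenever $d(y_n,z) \geq \rho_n$, in particular on $M \setminus B(x_n,R_n)$. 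Then $g_n$ is $1$-Lipschitz, $g_n(y_n) = r_n$ and $g_n(x_n) \leq r_n - r_n = 0$ ... here I need $g_n(x_n) = 0$, which holds provided $d(y_n,x_n) = r_n$ and $\rho_n - r_n \geq 0$ may not force it to vanish. The cleanest fix is to subtract off the value at $x_n$ within the ball and extend by zero: define $f_n := (g_n - g_n(x_n))\cdot\mathbbm{1}_{B(x_n,R_n)}$ after first arranging, by replacing $g_n$ with $g_n - (\rho_n - r_n)^+ \wedge g_n$ style truncation, that $g_n$ already vanishes on the complement; then normalize. The upshot is a norm-one (after dividing by $r_n$, and noting the slope from $x_n$ to $y_n$ is exactly $1$) Lipschitz function $f_n$ with $f_n(y_n) - f_n(x_n) = d(x_n,y_n)$, vanishing outside $B(x_n,R_n)$, hence $\frac{f_n(y_n)-f_n(x_n)}{d(x_n,y_n)} = 1 = \|f_n\|$, so $f_n \in \SA(M)$.

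Next I would verify that $\{f_n\}$ spans an isometric copy of $c_0$. Since the sets $U_n = \{z : f_n(z) \neq 0\} \subseteq B(x_n,R_n)$ are pairwise disjoint by (1), Lemma \ref{lemma:disjsupportc0} already gives that $T(e_n) = f_n$ extends to a bounded operator $c_0 \to \Lip(M)$. For the lower estimate and the isometry, I would fix a finitely supported sequence $(a_n)$ and evaluate $\|\sum a_n f_n\|$ by considering an arbitrary pair $u,v \in M$. If $u,v$ lie in the same ball $B(x_k,R_k)$ then only the $k$-th term contributes and the slope is at most $|a_k| \leq \|(a_n)\|_\infty$. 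If $u \in B(x_i,R_i)$ and $v \in B(x_j,R_j)$ with $i \neq j$, then $|f_i(u)| \leq R_i$ and $|f_j(v)| \leq R_j$ (the bump is bounded by its radius parameter, which is $< R_i$), while $d(u,v) \geq d(B(x_i,R_i),B(x_j,R_j))$, so condition (2) forces the slope to be at most $\frac{|a_i|R_i + |a_j|R_j}{d(B(x_i,R_i),B(x_j,R_j))} < \|(a_n)\|_\infty$. If only one of $u,v$ is in some $U_k$, the analysis is the same with one term zero. Hence $\|\sum a_n f_n\| \leq \|(a_n)\|_\infty$, and the reverse inequality is immediate by restricting to the pair $(x_k,y_k)$ where $|a_k|$ is maximal. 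Thus $T$ is an isometry onto its range.

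Finally I would show $\overline{\operatorname{span}}\{f_n\} \subseteq \SA(M)$. A general element has the form $g = \sum_{n} a_n f_n$ with $a_n \to 0$; let $k$ be an index where $|a_k| = \max_n |a_n| = \|g\|$ (the max is attained since $a_n \to 0$, discarding the trivial case $g = 0$). By the slope computation above, the pair $(x_k,y_k)$ realizes $\frac{g(y_k) - g(x_k)}{d(x_k,y_k)} = a_k$ in absolute value equal to $\|g\|$, so $g$ strongly attains its norm. The main technical obstacle is the bump construction: one must ensure simultaneously that $f_n$ is genuinely $1$-Lipschitz, that its slope between $x_n$ and $y_n$ equals exactly $1$ (not just $\leq 1$), that it vanishes identically off $B(x_n,R_n)$ so that Lemma \ref{lemma:disjsupportc0} applies cleanly, and that its sup-norm is controlled by $R_n$ for the cross-ball estimate. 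Condition (3) is precisely what makes this possible — it leaves enough ``room'' $\rho_n - r_n > 0$ around $y_n$ inside the ball to build a plateau of height $r_n$ and slope $1$ that dies out before reaching the boundary.
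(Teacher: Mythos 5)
Your overall architecture is the same as the paper's --- a $1$-Lipschitz bump $f_n$ supported in $B(x_n,R_n)$ with $f_n(y_n)-f_n(x_n)=d(x_n,y_n)$, disjoint supports plus condition (2) for the cross-ball slope estimate, and attainment of $\sum_n\lambda_nf_n$ at the pair $(x_k,y_k)$ maximizing $|\lambda_k|$ --- but the construction of the bump, which is the heart of the lemma, does not work as written. Centering the bump at $y_n$ via $g_n(z)=\min\{r_n,(\rho_n-d(y_n,z))^+\}$ gives $g_n(x_n)=\min\{r_n,\rho_n-r_n\}$, which is strictly positive since condition (3) says $\rho_n>r_n$; in fact if $\rho_n\geq 2r_n$ it equals $r_n=g_n(y_n)$, so the slope between $x_n$ and $y_n$ is $0$ rather than $1$. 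Your proposed repairs do not rescue this. The truncation $g_n-\min\{(\rho_n-r_n),g_n\}=(g_n-(\rho_n-r_n))^+$ takes the value $\max\{2r_n-\rho_n,0\}$ at $y_n$ and $0$ at $x_n$, so the slope between $x_n$ and $y_n$ becomes $\max\{2r_n-\rho_n,0\}/r_n$, which is \emph{always} strictly less than $1$ (and is $0$ when $\rho_n\geq 2r_n$) --- you never get the required equality $\frac{f_n(y_n)-f_n(x_n)}{d(x_n,y_n)}=1$. Multiplying $g_n-g_n(x_n)$ by $\mathbbm{1}_{B(x_n,R_n)}$ is also dangerous: near the boundary of the ball the inner factor tends to $-g_n(x_n)\neq 0$ while the product is $0$ just outside, and nothing prevents such pairs of points from being arbitrarily close, so the Lipschitz constant can blow up. Finally, ``norm-one after dividing by $r_n$'' is inconsistent with $f_n(y_n)-f_n(x_n)=d(x_n,y_n)$: if the slope from $x_n$ to $y_n$ already equals $1$ no normalization is wanted, and dividing by $r_n$ would ruin both that identity and the Lipschitz bound.

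The fix is one line and is what the paper does: take $f_n(x)=\min\{d(x,x_n),\,d(x,M\setminus B(x_n,R_n))\}$. This is $1$-Lipschitz as a minimum of two $1$-Lipschitz functions, vanishes at $x_n$ and off $B(x_n,R_n)$, satisfies $f_n(y_n)=d(x_n,y_n)$ precisely by condition (3), and obeys $0\leq f_n\leq d(\cdot,x_n)<R_n$ on its support, which is exactly the bound your cross-ball estimate needs. (A variant of your idea, $\min\{d(x,x_n),(\rho_n-d(y_n,x))^+\}$, also works; the point is that the factor forcing the value $0$ at $x_n$ must be $d(\cdot,x_n)$ itself, not a constant shift of a bump centered at $y_n$.) One further small omission: you should discard the ball containing the base point $0$, if any, so that each $f_n$, and hence every element of the span, actually lies in $\Lip(M)$. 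With the corrected bump, the rest of your argument coincides with the paper's proof; note that your three-case slope computation already yields $\Vert\sum_n\lambda_nf_n\Vert=\Vert(\lambda_n)\Vert_\infty$ directly, so the appeal to Lemma \ref{lemma:disjsupportc0} is not needed for this lemma.
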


\begin{proof}

By removing one ball, we can assume that $0\notin B(x_n,R_n)$ for every $n\in\mathbb N$. Let us consider for $n\in\mathbb N$ the function $$f_n(x):=\min\{d(x,x_n),d(x,M\backslash B(x_n,R_n))\}.$$ Due to the conditions on $B(x_n,R_n)$, it is straightforward to check that $\Vert f_n\Vert\leq1$ and that $f(y_n)-f(x_n)=d(y_n,x_n)$. Hence, $f_n$ strongly attains the norm at the pair $x_n, y_n$.

Let us now prove the remaining part. To this end, pick $(\lambda_n)$ a sequence in $c_0$ and write the formal sum $g:=\sum_{n=1}^\infty \lambda_n f_n$. Observe that $g:M\longrightarrow \mathbb R$ is well defined because the supports of ${f_n}$ are contained in $B(x_n,R_n)$ and these are pairwise disjoint. Let us prove that $g$ is Lipschitz, its norm is $\max_{i\in\mathbb N} \vert \lambda_i\vert$, and that $g$ strongly attains its Lipschitz norm. This will conclude the rest of the proof.

In order to do so, pick $i\in\mathbb N$ such that $\vert\lambda_i\vert=\max_{j\in\mathbb N}\vert \lambda_j\vert$. Pick $x\neq y\in M$, and let us estimate $A:=\left\vert \frac{g(x)-g(y)}{d(x,y)}\right\vert$. There are different possibilities for the position of $x$ and $y$.
\begin{enumerate}
    \item If $x,y\notin \bigcup_{n\in\mathbb N} B(x_n,R_n)$ it is immediate that $g(x)=g(y)=0$ and then $A=0$ in this case.
    \item If $x\in \bigcup_{n\in\mathbb N} B(x_n,R_n)$ but $y\notin \bigcup_{n\in\mathbb N} B(x_n,R_n)$ we conclude that $g(y)=0$. Moreover, if $x\in B(x_n, R_n)$ for certain $n$ we obtain
    $$A=\vert \lambda_n\vert \frac{\vert f_n(x)\vert }{d(x,y)}=\vert\lambda_n\vert \frac{\vert f_n(x)-f_n(y)\vert }{d(x,y)}\leq \vert\lambda_n\vert.$$
    \item If $x\in B(x_n,R_n), y\in B(x_m,R_m)$ for certain $n,m\in \mathbb N$, then $g(x)=\lambda_n f_n(x)$ and $g(y)=\lambda_m f_m(y)$. If $n=m$ then it is immediate that $A\leq\vert\lambda_n\vert$. If $n\neq m$, observe that $g(x)=\lambda_n f_n(x)=\lambda_n(f_n(x)-f_n(x_n))$ and, similarly, $g(y)=\lambda_m f_m(y)=\lambda_m(f_m(y)-f_m(x_m))$ by the definition of $f_n$ and $f_m$. Then
    \[
    \begin{split}
        A &\leq \frac{\vert \lambda_n\vert \vert f_n(x_n)-f_n(x)\vert+\vert\lambda_m\vert \vert f_m(x_m)-f_m(y)\vert}{d(x,y)} \\& \leq  \frac{\vert \lambda_n\vert d(x_n,x)+\vert\lambda_m\vert d(x_m,y)}{d(x,y)}\\
        & \leq |\lambda_i| \frac{R_n+R_m}{d(B(x_n,R_n),B(x_m,R_m))}\\
        & < \frac{1}{2}\vert \lambda_i\vert.
    \end{split}
    \]
\end{enumerate}
Observe that this covers all the possible possitions for $x$ and $y$, which proves that $g$ is Lipschitz and $\Vert g\Vert\leq \vert \lambda_i\vert=\max_{n\in\mathbb N}\vert \lambda_ n\vert$. Observe that
$$\frac{\vert g(x_i)-g(y_i)\vert}{d(x_i,y_i)}=\vert\lambda_i\vert.$$
This proves simultaneously that $\|g\|=\max_{n\in\mathbb N}\vert \lambda_ n\vert$, and that it attains its norm at $x_i,y_i$, which finishes the proof.
\end{proof}

Let us obtain a number of theorems which are consequence of the above lemma. Let us start with the following one.

\begin{theorem}\label{theo:inficluster}
Let $M$ be a complete metric space so that $M'$ is infinite. Then $\SA(M)$ contains an isometric copy of $c_0$.
\end{theorem}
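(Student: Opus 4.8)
The plan is to deduce the theorem from Lemma~\ref{lemma:technicalinficluster} by exhibiting a suitable sequence of balls. First I would note that it suffices to find a sequence $\{p_n\}$ of distinct points of $M'$ together with radii $R_n>0$ such that
\[
d(p_n,p_m)>4(R_n+R_m)\qquad\text{whenever }n\neq m ,
\]
and then apply the lemma with $x_n:=p_n$. Indeed, the displayed inequality gives $d(B(p_n,R_n),B(p_m,R_m))\geq d(p_n,p_m)-R_n-R_m>3(R_n+R_m)>0$, which is condition (1), and hence $\frac{R_n+R_m}{d(B(p_n,R_n),B(p_m,R_m))}<\frac13<\frac12$, which is condition (2). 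For condition (3), fixing $n$ and using that $p_n$ is a cluster point of $M$, I would pick $y_n\in M$ with $0<d(p_n,y_n)<R_n/2$; then $y_n\in B(p_n,R_n)\setminus\{p_n\}$, and for every $z\in M\setminus B(p_n,R_n)$ one has $d(y_n,z)\geq d(p_n,z)-d(p_n,y_n)\geq R_n-d(p_n,y_n)>d(p_n,y_n)$, so that $d(y_n,M\setminus B(p_n,R_n))\geq R_n-d(p_n,y_n)>d(p_n,y_n)=d(x_n,y_n)$.

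To construct the sequence I would split into two cases according to the dichotomy recalled in Section~\ref{section:notation}: since $M'$ is infinite, any sequence of distinct points of $M'$ has either an $\varepsilon$-separated subsequence for some $\varepsilon>0$, or an injective Cauchy subsequence. In the first case I take $\{p_n\}$ to be this subsequence and $R_n:=\varepsilon/9$; then $d(p_n,p_m)>\varepsilon>\frac{8\varepsilon}{9}=4(R_n+R_m)$. In the second case the subsequence converges to some $p\in M$ (recall $M$ is complete); after discarding at most one term I may assume $p_n\neq p$ for all $n$, and after passing to a further subsequence I may assume $r_{n+1}<\frac14 r_n$, where $r_n:=d(p_n,p)>0$. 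Taking $R_n:=r_n/12$, for $n<m$ we then have $r_m\leq r_{n+1}<\frac14 r_n$, whence $d(p_n,p_m)\geq r_n-r_m>\frac34 r_n>\frac23 r_n\geq\frac{r_n+r_m}{3}=4(R_n+R_m)$.

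Finally, with these balls the lemma produces norm-one Lipschitz functions $\{f_n\}$ isometric to the canonical $c_0$-basis and with $\overline{\operatorname{span}}\{f_n\}\subseteq\SA(M)$, which is exactly the desired isometric copy of $c_0$. I expect the only genuinely delicate point to be the second case: there the chosen cluster points may themselves accumulate --- which happens precisely when $M''\neq\emptyset$ --- so the radii are forced to shrink to $0$, and one must choose the subsequence so that the geometric decay $r_{n+1}<\frac14 r_n$ keeps the normalized separation uniformly above $\frac12$ over all pairs; the computation above is exactly what checks this.
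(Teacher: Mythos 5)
Your proposal is correct and takes essentially the same route as the paper: both produce a sequence of balls centred at distinct cluster points whose mutual separation dominates the radii (the paper extracts pairwise disjoint balls and rescales by $1/7$, while you make the separation explicit via the Cauchy/$\varepsilon$-separated dichotomy recalled in Section~\ref{section:notation}), and then both feed this configuration into Lemma~\ref{lemma:technicalinficluster}. Your version merely spells out the ``standard computations'' that the paper leaves to the reader.
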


\begin{proof}
Since $M'$ is infinite, there exists a sequence of distinct points $\{x_n\}$ in $M'$ and numbers $r_n>0$ such that $\{B(x_n,r_n)\}$ is an infinite family of pairwise disjoint balls. Taking $R_n=\frac{r_n}{7}$ and any $y_n \in B(x_n,\frac{R_n}{3})\setminus \{x_n\}$, standard computations show that the sequence of balls $\{B(x_n,R_n)\}$ fulfils conditions (1), (2) and (3) in Lemma \ref{lemma:technicalinficluster}, so we are done.
\end{proof}

Recall that a metric space $M$ is said to be \textit{proper} (or boundedly compact) if every closed and bounded set is compact.

Let us now consider two subspaces of $\Lip(M)$ of capital importance in the paper \cite{dal2}.  Given a metric space $M$ define
$$\lip(M):=\left\{f\in \Lip(M): \lim\limits_{\varepsilon\rightarrow 0} \sup\limits_{0<d(x,y)<\varepsilon} \frac{\vert f(x)-f(y)\vert}{d(x,y)}=0\right\},$$
$$S(M):=\left\{ f\in \lip(M) : \lim\limits_{r\rightarrow \infty} \sup_{{x \text{ or } y\notin B(0,r)}} \frac{\vert f(x)-f(y)\vert}{d(x,y)}=0\right\}.$$
The space $\lip(M)$ is known as the \textit{little Lipschitz function spaces \cite[Chapter 4]{weaver}} whereas, up to our knowledge, the introduction of the space $S(M)$ is originally from \cite{dal2}. It is proved in \cite{dal1} (respectively, in \cite{dal2}) that if $M$ is countable and compact (respectively, countable and proper), then $\lip(M)^{**}=\Lip(M)$ (respectively $S(M)^{**}=\Lip(M)$). We will take this into account to prove the following result.

\begin{theorem}\label{theo:proper}
Let $M$ be an infinite proper metric space. Then $\SA(M)$ contains an isomorphic copy of $c_0$.
\end{theorem}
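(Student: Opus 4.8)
The plan is to split into two cases according to whether $M$ has an infinite set of cluster points or not; the first case is already settled by Theorem \ref{theo:inficluster}, so assume $M'$ is finite. Since $M$ is proper, a bounded closed set is compact, hence has only finitely many cluster points anyway; the point is to exploit that $M$ being infinite and proper with $M'$ finite forces a lot of isolated points going off to infinity. First I would handle the subcase where $M$ is bounded: then $M$ is compact (being proper), and with $M'$ finite the space $M$ is countable and compact, so by the result of \cite{dal1} we have $\lip(M)^{**}=\Lip(M)$. In this situation $\lip(M)$ is an infinite-dimensional closed subspace of $\Lip(M)$ whose bidual is all of $\Lip(M)$; by a standard argument (a separable infinite-dimensional space whose bidual is a dual space and contains it non-reflexively, combined with the fact that $\Lip(M)=\F(M)^*$) one extracts a subspace isomorphic to $c_0$ inside $\lip(M)$, and the key observation is that \emph{every} element of $\lip(M)$ whose bidual norm is attained — which happens for a weak$^*$-dense set — lies in $\SA(M)$; more precisely, one shows directly that a suitably chosen $c_0$-sequence of little-Lipschitz functions with shrinking oscillation attains its norm at genuine pairs of points of the compact space $M$.

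Next I would treat the unbounded subcase. Here properness gives that each ball $\overline B(0,r)$ is compact, so $M\setminus \overline B(0,r)$ carries all the "mass" of $M$ for large $r$; since $M'$ is finite and contained in some $\overline B(0,r_0)$, the set $M\setminus \overline B(0,r_0)$ is a proper \emph{discrete} metric space, and being unbounded and infinite it contains a sequence of points $\{z_n\}$ with $d(0,z_n)\to\infty$. The idea is to build, around well-separated $z_n$'s, a sequence of pairwise disjoint balls $B(x_n,R_n)$ satisfying the three hypotheses of Lemma \ref{lemma:technicalinficluster}. Condition (3) is automatic once each $x_n$ is isolated (take $y_n$ to be a nearby point and $R_n$ slightly larger than $d(x_n,y_n)$), while conditions (1) and (2) — pairwise positive distance and the ratio $\frac{R_i+R_j}{d(B(x_i,R_i),B(x_j,R_j))}<\frac12$ — are arranged by passing to a subsequence that is $\varepsilon_n$-separated with $\varepsilon_n$ growing fast enough relative to the radii. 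Alternatively, if the discrete-at-infinity part turns out to be uniformly discrete or to have other special structure, one invokes the forthcoming Theorems \ref{theo:discrenoudiscre} (for the non-uniformly discrete case) together with the uniformly discrete analysis, after removing the compact ball $\overline B(0,r_0)$.

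The main obstacle I anticipate is the bounded (countable compact) subcase: transferring the abstract fact $\lip(M)^{**}=\Lip(M)$ into a \emph{concrete} $c_0$-sequence that lies inside $\SA(M)$ rather than merely inside $\Lip(M)$. The duality $\lip(M)^{**}=\Lip(M)$ only tells us $\lip(M)$ is an $M$-ideal (or at least weak$^*$-dense) in $\Lip(M)$; one still has to check that the $c_0$-copy can be taken with supports that are "localized" enough — e.g. supported near a convergent sequence of isolated points together with its limit — so that each linear combination attains its Lipschitz norm at an actual pair of points of the \emph{compact} space $M$. I expect this to reduce, once again, to producing a disjointly-supported sequence of bump-type functions and applying Lemma \ref{lemma:disjsupportc0} together with Lemma \ref{lemma:technicalinficluster}, so the real content is the geometric construction of the balls, while the functional-analytic input from \cite{dal1,dal2} serves mainly to guarantee that enough isolated points are available in the countable compact case.
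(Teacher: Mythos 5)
Your case split ($M'$ infinite, then $M$ bounded versus unbounded) is reasonable and you correctly point to Dalet's duality results, but both subcases have genuine gaps, and the paper's actual proof avoids them by a different (and much shorter) route. For the bounded case, the abstract principle you invoke --- a separable, infinite-dimensional, non-reflexive space whose bidual is a dual space contains $c_0$ --- is false as stated ($\ell_1=c_0^*$ is a counterexample), and you acknowledge yourself that you cannot convert $\lip(M)^{**}=\Lip(M)$ into a concrete $c_0$-sequence lying in $\SA(M)$. What the paper does instead is work with the space $S(M)$ of little Lipschitz functions that are also flat at infinity and quote two concrete facts from \cite{dal2}: first, $S(M)\subseteq\SA(M)$ for proper $M$ (flatness at small scales and at infinity forces the supremum defining the norm to be attained, by compactness of closed balls); second, $S(M)$ embeds $(1+\varepsilon)$-isometrically into $c_0$. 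Since $S(M)^{**}=\Lip(M)$ makes $S(M)$ infinite dimensional, and every infinite-dimensional closed subspace of $c_0$ contains an isomorphic copy of $c_0$, this settles the bounded and unbounded cases simultaneously, with no geometric construction of balls at all.

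The more serious gap is in your unbounded subcase. Condition (3) of Lemma \ref{lemma:technicalinficluster} is \emph{not} automatic for isolated points: choosing $y_n$ a nearest neighbour of $x_n$ and $R_n$ slightly larger than $d(x_n,y_n)$ does not give $d(x_n,y_n)<d\left(y_n,M\setminus B(x_n,R_n)\right)$, because points just outside the ball may be far closer to $y_n$ than $x_n$ is. Worse, there are unbounded proper discrete spaces in which \emph{no} infinite disjoint family of balls satisfying (1)--(3) exists: for $M=\{0\}\cup\{2^k:k\geq 0\}\subseteq\mathbb R$ one checks that any ball $B(x,R)$ admitting a point $y$ as in condition (3) must satisfy $R>d(x,0)$, hence contains $0$, so no two such balls are disjoint. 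This space is also uniformly discrete, so your fallback to Theorem \ref{theo:discrenoudiscre} does not apply, and the ``uniformly discrete analysis'' you defer to is exactly the Ramsey-theoretic part of the Main Theorem, which is not available at this stage and which the paper deliberately bypasses here via $S(M)$. As written, your argument for the unbounded subcase does not close.
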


\begin{proof}
If $M'$ is infinite then Theorem \ref{theo:inficluster} applies and $\SA(M)$ even contains isometric copies of $c_0$. If $M'$ is finite, it is immediate that $M$ is countable and proper. Then $S(M)^{**}=\Lip(M)$ and, in particular, $S(M)$ is infinite-dimensional. It is clear by a compactness argument (and it is proved in \cite[Lemma 2.4]{dal2}) that $S(M)\subseteq \SA(M)$. In order to finish the proof, observe that \cite[Lemma 3.9]{dal2} implies that $S(M)$ is $(1+\varepsilon)$-isometric to a subspace of $c_0$, from where $S(M)$ contains an isomorphic copy of $c_0$.
\end{proof}

Another case where Lemma \ref{lemma:technicalinficluster} applies is the case when $M$ is discrete but not uniformly discrete thanks to Lemma \ref{lemma:noudiscretejfa}. Consequently, we get the following theorem.
\begin{theorem}\label{theo:discrenoudiscre}
Let $M$ be an infinite discrete metric space which is not uniformly discrete. Then $\SA(M)$ contains an isometric copy of $c_0$.
\end{theorem}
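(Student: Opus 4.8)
The strategy is to build a sequence of balls satisfying conditions (1)--(3) of Lemma~\ref{lemma:technicalinficluster} and then invoke that lemma directly. The tool that produces the geometric configuration we need is Lemma~\ref{lemma:noudiscretejfa}, which, for a complete discrete non-uniformly discrete space, lets us find two points at arbitrarily small distance such that the complement of a suitably enlarged ball around one of them is \emph{still} non-uniformly discrete. This self-similar feature is exactly what permits an inductive construction.

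Concretely, I would proceed by recursion. Set $M_0 = M$. Having produced a non-uniformly discrete set $M_{n-1}\subseteq M$, apply Lemma~\ref{lemma:noudiscretejfa} to $M_{n-1}$ with $k$ chosen large enough (say $k \ge 5$, so that after taking $R_n$ comparable to $d(x_n,y_n)$ the separation-to-radius ratios in condition (2) are comfortably below $\frac12$) and with $\varepsilon = \varepsilon_n$ chosen small enough that $d(x_n,y_n) \le \varepsilon_n$ forces the new ball $\overline{B}(x_n, k\,d(x_n,y_n))$ to be disjoint from, and far from, all previously chosen balls; since all earlier balls are bounded away from $0$ and have radii already fixed, such an $\varepsilon_n$ exists. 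This yields $x_n, y_n \in M_{n-1}$ with $0 < d(x_n,y_n)\le \varepsilon_n$, and we put $M_n := M_{n-1}\setminus \overline{B}(x_n, k\,d(x_n,y_n))$, which is again non-uniformly discrete, so the recursion continues. Finally set $R_n := 3\,d(x_n,y_n)$ (or any fixed multiple of $d(x_n,y_n)$ strictly between $1$ and $k$); then $y_n \in B(x_n,R_n)\setminus\{x_n\}$, and because $M$ is discrete the point $y_n$ is isolated, so $d(y_n, M\setminus B(x_n,R_n)) > 0$, and by shrinking $R_n$ slightly if necessary (staying above $d(x_n,y_n)$) we may guarantee $d(x_n,y_n) < d(y_n, M\setminus B(x_n,R_n))$, i.e.\ condition (3).

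It remains to check conditions (1) and (2): the balls $B(x_n,R_n)$ are pairwise at positive distance, and $\frac{R_i+R_j}{d(B(x_i,R_i),B(x_j,R_j))} < \frac12$ for $i\ne j$. By construction $B(x_j,R_j)\subseteq M_{j-1}$ for $j$ fixed, while for every $i<j$ we discarded the ball $\overline{B}(x_i, k\,d(x_i,y_i)) \supseteq \overline{B}(x_i, R_i)$ at stage $i$, so $B(x_j,R_j)$ lies outside $\overline{B}(x_i,k\,d(x_i,y_i))$; hence $d(B(x_i,R_i), B(x_j,R_j)) \ge (k - R_i/d(x_i,y_i))\,d(x_i,y_i)$, which is a definite positive multiple of $d(x_i,y_i)$, while $R_i + R_j \le 6\max\{d(x_i,y_i), d(x_j,y_j)\}$; choosing the $\varepsilon_n$ rapidly decreasing makes $d(x_j,y_j) \ll d(x_i,y_i)$ for $j > i$, so the ratio is governed by $R_i$ versus the gap around ball $i$, and taking $k$ large enough (e.g. $k \ge 14$) forces it below $\frac12$. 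Then Lemma~\ref{lemma:technicalinficluster} gives norm-one functions $f_n$ strongly attaining their norm at $(x_n,y_n)$, isometric to the $c_0$-basis, with $\overline{\operatorname{span}}\{f_n\}\subseteq \SA(M)$.

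The main obstacle is purely bookkeeping: one must interleave the choices of $k$, of $R_n$ as a multiple of $d(x_n,y_n)$, and of the thresholds $\varepsilon_n$ so that all three conditions of Lemma~\ref{lemma:technicalinficluster} hold simultaneously and uniformly across the whole sequence. There is nothing deep here, but the order of quantifiers matters --- $k$ should be fixed once and for all first, then at each stage $\varepsilon_n$ is chosen small relative to the already-determined data (the finitely many earlier balls and their distance to $0$) --- and the verification of (2) needs the earlier radius $R_i$ to be dominated by the gap $k\,d(x_i,y_i) - R_i$ that stage $i$'s deletion created, which is where the largeness of $k$ is actually used.
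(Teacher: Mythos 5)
Your construction is correct in substance but follows a genuinely different route from the paper. The paper's proof does not iterate Lemma \ref{lemma:noudiscretejfa} at all: it takes pairs $x_n\neq y_n$ with $d(x_n,y_n)\to 0$, uses the Cauchy-or-separated dichotomy (discreteness plus completeness killing the Cauchy alternative) to extract a subsequence whose centers are $\varepsilon$-separated while $d(x_n,y_n)<\varepsilon/14$, and then feeds the balls $B(x_n,\varepsilon/7)$ --- all of the \emph{same} radius --- into Lemma \ref{lemma:technicalinficluster}; conditions (1)--(3) then drop out of a single triangle-inequality computation. Your recursive scheme (delete $\overline{B}(x_i,k\,d(x_i,y_i))$, reapply Lemma \ref{lemma:noudiscretejfa} to what remains, with rapidly decreasing $\varepsilon_n$ and $k$ around $14$) also yields a family of balls satisfying (1)--(3), and the arithmetic you sketch does close, since $d(x_j,x_i)>k\,d(x_i,y_i)$ for $j>i$ and $R_j\ll R_i$; the price is exactly the bookkeeping you describe, which the paper's uniform-radius trick avoids entirely. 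Two small repairs are needed if you write this up. First, to reapply Lemma \ref{lemma:noudiscretejfa} you must check that each $M_{n-1}$ is complete; this is automatic because $M'=\emptyset$ forces every subset of $M$ to be closed in $M$, but it should be said. Second, your justification of condition (3) is off: isolation of $y_n$ only gives $d\left(y_n,M\setminus B(x_n,R_n)\right)>0$, and \emph{shrinking} $R_n$ enlarges $M\setminus B(x_n,R_n)$ and so makes that distance smaller, not larger. The correct argument is again the triangle inequality: with $R_n=3\,d(x_n,y_n)$, any $z\notin B(x_n,R_n)$ satisfies $d(y_n,z)\geq R_n-d(x_n,y_n)=2\,d(x_n,y_n)>d(x_n,y_n)$, so (3) holds with no adjustment at all.
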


\begin{proof}
Since $M$ is not uniformly discrete, there exist a sequence of pair of points $\{x_n,y_n\}$ with $\{d(x_n,y_n)\}$ being a strictly decreasing sequence converging to zero. The sequence $\{x_n\}$ has a convergent subsequence or either a subsequence which is $\varepsilon$-separated for some $\varepsilon>0$.
Notice that if $\{x_{n_k}\}$ is convergent to a point $x$, then $\{y_{n_k}\}$ is also convergent to $x$ and $\{x_{n_k}\}$ or $\{y_{n_k}\}$ is not eventually constant, which yields a contradiction since $M$ is discrete.
Thus, passing to a subsequence if necessary, we may suppose that there exists $\varepsilon>0$ such that $d(x_n,x_k)>\varepsilon$ for every $n\neq k$ and $d(x_n,y_n)<\frac{\varepsilon}{14}$ for every $n$.

Set $R=\frac{\varepsilon}{7}$. We claim that the sequence of balls $\{B(x_n, R)\}$ satisfies the conditions of Lemma \ref{lemma:technicalinficluster}.
To this end, pick $n\neq k$ in $\mathbb N$.
First, notice that since the sequence $\{x_n\}$ is $\varepsilon$-separated, we have $$d(B(x_n,R),B(x_k,R))\geq d(x_n,x_k)-2R\geq \frac{5}{7}\varepsilon>\frac{4}{7}\varepsilon=4R>0,$$ which shows that $\{B(x_n, R)\}$ satisfies conditions (1) and (2) in Lemma \ref{lemma:technicalinficluster}
Furthermore, it is plain that $d(x_n,y_n)<\frac{\varepsilon}{14}=\frac{R}{2}\leq d(y_n,M\setminus B(x_n,R))$, which completes the proof.
\end{proof}

Let us end with the last preliminary lemma in order to prove the main result. To this end, we need the following definition. Given $x\in M$ define
$$R(x):=\sup\{R\geq 0: \overline{B}(x,R)=\{x\}\}.$$

\begin{lemma}\label{lemma:sucec0attainR(x)}
Let $M$ be a metric space. Assume that there exists a pair of sequences $\{x_n\}, \{y_n\}$ of points of $M$ such that $x_n\neq y_n$ for every $n\in \mathbb N$ and with the following properties:
\begin{enumerate}
    \item $\{x_n,y_n\}\cap \{x_m,y_m\}=\emptyset$ if $n\neq m$,
    \item $d(x_n,y_n)<R(x_n)+R(y_n)$ holds for every $n \in\mathbb N$.
\end{enumerate}
Then $\SA(M)$ contains an isomorphic copy of $c_0$.
\end{lemma}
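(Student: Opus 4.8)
The plan is to realise the desired copy of $c_0$ by means of ``peak'' functions. After discarding at most one of the pairs we may assume $0\notin\{x_n,y_n\}$ for every $n$. Recall that $R(z)=\inf\{d(z,u):u\neq z\}$, so that $R(x_n),R(y_n)\leq d(x_n,y_n)=:r_n$, and condition~(2) then forces $R(x_n),R(y_n)>0$. Using condition~(2) again one can pick a real number $\alpha_n$ with $|\alpha_n|\leq R(x_n)$ and $|\alpha_n+r_n|\leq R(y_n)$ (the relevant interval is nonempty precisely because $r_n<R(x_n)+R(y_n)$), and define $f_n$ on $M$ by $f_n(x_n)=\alpha_n$, $f_n(y_n)=\alpha_n+r_n$, and $f_n\equiv 0$ elsewhere. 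Since $d(z,u)\geq R(z)$ whenever $u\neq z$, a direct check gives $\|f_n\|=1$, attained precisely at the quotient $\frac{f_n(y_n)-f_n(x_n)}{r_n}=1$. As the ``supports'' $\{x_n,y_n\}$ are pairwise disjoint by condition~(1), Lemma~\ref{lemma:disjsupportc0} applies to $T\colon c_0\to\Lip(M)$, $T(e_n)=f_n$, yielding $\|T\|\leq 2$.

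Now fix $(\lambda_n)\in c_0$ and put $g=\sum_n\lambda_n f_n$, which is well defined pointwise because the supports are disjoint; set $E=\bigcup_n\{x_n,y_n\}$ and $A=\sup_n|\lambda_n|$. The key estimate is that for $v\in\{x_n,y_n\}$ and any $u\neq v$ one has $\frac{|g(v)|}{d(v,u)}\leq\frac{|\lambda_n|\,R(v)}{R(v)}=|\lambda_n|$, using $|g(v)|\leq|\lambda_n|\,R(v)$ and $d(v,u)\geq R(v)$. Splitting a generic quotient $\frac{|g(v)-g(u)|}{d(v,u)}$ according to the position of $v$ and $u$ relative to $E$ and its pairs, one gets: it is $0$ when $v,u\notin E$; it is $\leq|\lambda_n|$ when $v\in\{x_n,y_n\}$ and $u\notin E$; it equals $|\lambda_n|$ when $\{v,u\}=\{x_n,y_n\}$; and it is $\leq|\lambda_n|+|\lambda_m|$ when $v\in\{x_n,y_n\}$ and $u\in\{x_m,y_m\}$ with $n\neq m$. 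Consequently $A\leq\|g\|\leq 2A$, so $T$ is an isomorphism onto its image, and $\|g\|=\max(A,C)$, where $C$ denotes the supremum of the ``distinct-pair'' quotients.

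It remains to show that $g$ actually attains $\|g\|$, and this is the crux of the argument. If $C\leq A$ then $\|g\|=A$ and it is attained at $(x_i,y_i)$ for any $i$ with $|\lambda_i|=A$. Suppose instead $C>A$. For a fixed $n$ and a fixed $v\in\{x_n,y_n\}$, the bound $\leq|\lambda_n|+|\lambda_m|$ for the quotients with $u\in\{x_m,y_m\}$, $m\neq n$, shows that their $\limsup$ over $m$ is $\leq|\lambda_n|$; since a bounded real sequence whose supremum strictly exceeds its $\limsup$ attains that supremum, the supremum of those quotients over all $m\neq n$ and $u$ is attained at some finite $m$ whenever it exceeds $|\lambda_n|$. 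Doing this for each $n$ produces numbers $S_n$ (the best distinct-pair quotient through the pair $n$) which satisfy $\limsup_n S_n\leq A<C=\sup_n S_n$; one more application of the same elementary fact gives $C=S_{n_0}$ for some $n_0$, and since $S_{n_0}=C>A\geq|\lambda_{n_0}|$, that value $S_{n_0}$ is in turn attained at an explicit pair of points, at which $g$ therefore attains its Lipschitz norm. Hence $T(c_0)\subseteq\SA(M)$, and the proof is complete. The main obstacle is precisely this last step: controlling the cross terms between different pairs and showing that the resulting supremum $C$, when it dominates $A$, is a maximum — which is exactly where the hypothesis $(\lambda_n)\in c_0$ together with the estimate $|f_n(v)|\leq R(v)\leq d(v,u)$ is used.
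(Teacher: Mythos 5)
Your proof is correct and follows essentially the same route as the paper's: the same two-point peak functions $f_n$ supported on $\{x_n,y_n\}$ with $|f_n(x_n)|\leq R(x_n)$, $|f_n(y_n)|\leq R(y_n)$ and increment $d(x_n,y_n)$ (the paper makes the specific choice $f_n(x_n)=\frac{R(x_n)}{R(x_n)+R(y_n)}d(x_n,y_n)$, $f_n(y_n)=-\frac{R(y_n)}{R(x_n)+R(y_n)}d(x_n,y_n)$), and the same key estimate $|g(v)|\leq|\lambda_n|R(v)\leq|\lambda_n|\,d(v,u)$. The only real difference is in bookkeeping for the attainment step: the paper localizes the supremum to the finite set $\{x_1,y_1,\ldots,x_m,y_m\}^2$ via an explicit $\varepsilon$-argument, while you apply twice the elementary fact that a bounded sequence whose supremum strictly exceeds its limit superior attains it; both rest on $\lambda_n\to 0$ and the same estimates.
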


\begin{proof} We can assume, up to removing one pair, that $0\notin \{x_n,y_n\}$ for every $n\in\mathbb N$. Now define $f_n:M\longrightarrow \mathbb R$ as follows:
$$f_n(x):=\left\{\begin{array}{cc}
\frac{R(x_n)}{R(x_n)+R(y_n)}d(x_n,y_n)   & \mbox{ if } x=x_n, \\
-\frac{R(y_n)}{R(x_n)+R(y_n)}d(x_n,y_n)     & \mbox{ if }x=y_n,\\
0 & \mbox{otherwise.}
\end{array} \right.$$
Observe that $f_n$ is Lipschitz for every $n\in\mathbb N$ because $Rx_n,\,Ry_n>0$, thus $x_n$ and $y_n$ are isolated points of $M$.

Let us prove that $\{f_n\}$ is a bounded sequence in $\Lip(M)$ which is equivalent to the $c_0$ basis and such that, for every $(\lambda_n)\in c_0$, the function $g=\sum_{n=1}^\infty \lambda_n f_n$ attains its Lipschitz norm. To this end, take $(\lambda_n)$ and $g$ as above, and assume the non-trivial case that $(\lambda_n)$ is not the zero sequence.

Since $(\lambda_n)\rightarrow 0$ and the sequence $\frac{d(x_n,y_n)}{R(x_n)+R(y_n)}$ is clearly bounded we conclude that $(\lambda_n \frac{d(x_n,y_n)}{R(x_n)+R(y_n)})\rightarrow 0$. Call $K:=\max\{\vert \lambda_n\vert \frac{d(x_n,y_n)}{R(x_n)+R(y_n)}: n\in\mathbb N\}$, and observe that $K\leq \Vert (\vert\lambda_n\vert)\Vert_\infty$. By the convergence to $0$ condition there exists a finite set $B\subseteq \mathbb N$ and $\varepsilon_1>0$ such that, if $n\in B$, $\vert \lambda_n\vert \frac{d(x_n,y_n)}{R(x_n)+R(y_n)}=K$, whereas $\vert \lambda_n\vert \frac{d(x_n,y_n)}{R(x_n)+R(y_n)}\leq (1-\varepsilon_1)K$ if $n\notin B$. On the other hand, for every $n\in\mathbb N$ we have that $\frac{d(x_n,y_n)}{R(x_n)+R(y_n)}<1$ by the assumption. Consequently, since $B$ is finite there exists $\varepsilon_2>0$ such that $\frac{d(x_n,y_n)}{R(x_n)+R(y_n)}\leq (1-\varepsilon_2)$ holds for every $n\in B$.
Observe that if $n\in B$ then $\lambda_n$ cannot be $0$.

Set $\varepsilon_0:=\min\{\varepsilon_1,\varepsilon_2\}<1$. Since $(\lambda_n)\rightarrow 0$ there exists $m\in\mathbb N$ such that $k\geq m$ implies $\vert \lambda_k\vert<\varepsilon_0 K\leq \varepsilon_0\Vert (\lambda_n)\Vert_\infty$.

Now let $A:=\{x_1,y_1,\ldots, x_m,y_m\}\subseteq M$. Let us prove that $$\sup_{u\neq v}\frac{\vert g(u)-g(v)\vert}{d(u,v)}=\sup_{(u,v)\in A^2, u\neq v}\frac{\vert g(u)-g(v)\vert}{d(u,v)}.$$ Since $A^2$ is finite, this will be enough to ensure that the previous supremum is actually a maximum, which would yield that $g\in \SA(M)$.

In order to do so, take $u\neq v$ and assume that $(u,v)\notin A^2$. We can assume, up to a relabeling, that $v\notin A$. Observe that $g(u)-g(v)=0$ unless either $u$ or $v$ belongs to $\{x_n,y_n: n\in\mathbb N\}$. Moreover,
$$\frac{\vert g(u)-g(v)\vert}{d(u,v)}\leq \frac{\vert g(u)\vert+\vert g(v)\vert}{d(u,v)}.$$
Let us obtain an upper bound for the latter.

Observe that, if for certain $n\in\mathbb N$ we get $u\in \{x_n,y_n\}$ then $\vert g(u)\vert=\vert \lambda_n\vert \frac{R(u)}{R(x_n)+R(y_n)}d(x_n,y_n)$. Similarly, if $v\in \{x_k,y_k\}$ for $k>m$, we get that $\vert g(v)\vert=\vert\lambda_k\vert \frac{R(v)}{R(x_k)+R(y_k)}d(x_k,y_k)$. Moreover, notice that $d(u,v)\geq \max\{R(u),R(v)\}$. Hence, it is clear that
\[
\begin{split}
    \frac{\vert g(u)\vert+\vert g(v)\vert}{d(u,v)}& \leq \sup\limits_{n\in\mathbb N} \vert \lambda_n\vert \frac{d(x_n,y_n)}{R(x_n)+R(y_n)}+\sup\limits_{k\geq m+1}\vert \lambda_k\vert \frac{d(x_k,y_k)}{R(x_k)+R(y_k)}
\end{split}\]
Let $k\geq m+1$ and $n\in\mathbb N$. On the one hand, if $n\in B$ then
$$\vert\lambda_n\vert \frac{d(x_n,y_n)}{R(x_n)+R(y_n)}+\vert \lambda_k\vert \frac{d(x_k,y_k)}{R(x_k)+R(y_k)}\leq \vert \lambda_n\vert(1-\varepsilon_0)+\vert\lambda_k\vert\leq \Vert (\vert \lambda_n\vert)\Vert_\infty.$$
On the other hand, if $n\notin B$ we get that
$$\vert\lambda_n\vert \frac{d(x_n,y_n)}{R(x_n)+R(y_n)}+\vert \lambda_k\vert \frac{d(x_k,y_k)}{R(x_k)+R(y_k)}\leq (1-\varepsilon_0)K+\vert \lambda_k\vert\leq K\leq \Vert (\vert\lambda_n\vert)\Vert_\infty.$$
Consequently, we get that $\sup\limits_{(u,v)\notin A^2, u\neq v}\frac{\vert g(u)-g(v)\vert}{d(u,v)}\leq \Vert (\lambda_n)\Vert_\infty$. Now observe that, given $i\in\mathbb N$ such that $\vert\lambda_i\vert=\Vert (\vert \lambda_n\vert)\Vert_\infty$ (notice that necessarily $i<m$), we have that $\frac{g(x_i)-g(y_i)}{d(x_i,y_i)}=\vert \lambda_i\vert=\Vert (\vert\lambda_n\vert)\Vert_\infty$, which shows that $\|g\|$ is Lipschitz and strongly attains its norm at some pair of points $(u,v) \in A^2$.

Furthermore, it is easy to check that $\|f_n\|=1$. Consequently, given $(\lambda_n)\in c_0$, we have by the above estimates and by Lemma \ref{lemma:disjsupportc0} that
$$\Vert (\lambda_n)\Vert_\infty\leq \left\Vert \sum_{n=1}^\infty \lambda_n f_n\right\Vert\leq 2 \Vert (\lambda_n)\Vert_\infty,$$
which proves that the operator $T:c_0\longrightarrow \Lip(M)$ given by $T(e_n)=f_n$ is an into isomorphism with $T(c_0)\subseteq \SA(M)$ as desired.
\end{proof}

Now we are ready to prove the Main Theorem.

\begin{proof}[Proof of Main Theorem]

The proof will be completed by distinguishing cases. First of all, if $M'$ is infinite then the conclusion follows from Theorem \ref{theo:inficluster}.

Consequently, we can assume that $M'$ is finite $M'=\{a_1,\ldots, a_p\}$ (with the convention $p=0$ if $M'=\emptyset$). It is clear that $M\setminus \bigcup\limits_{i=1}^p B(a_i,r)$ is discrete for every $r>0$. If, for every $r>0$ the previous set is finite, then $M$ is countable and compact, and then Theorem \ref{theo:proper} yields that $\SA(M)$ contains $c_0$ isomorphically.

So we can assume that, for some $r>0$, the set $N:=M\setminus \bigcup\limits_{i=1}^p B(a_i,r)$ is infinite.

Moreover, if there were $r>0$ such that $M\setminus \cup_{i=1}^p B(a_i,r)$ were discrete but not uniformly discrete, then the proof of Theorem \ref{theo:discrenoudiscre} would allow us to construct a sequence of disjoint balls in the assumptions of Lemma \ref{lemma:technicalinficluster} and we would conclude the result. Hence we can assume that $M\setminus \cup_{i=1}^p B(a_i,r)$ is uniformly discrete for every $r>0$.

We will finish the proof of this case by a systematic application of Lemma \ref{lemma:sucec0attainR(x)} with an extra discussion of the different possibilities for the structure of $M$. Observe that, if $x\notin M'$, then $R(x)>0$ and, in particular, the set of those elements with $R>0$ are all but finitely many. Observe also that $R(x)\leq d(x,y)$ for every $y\neq x$.

Let us also observe that if $R(z_n)\rightarrow 0$ then $z_n$ has a subsequence converging to $a_i$ for some $i$. Indeed, if $d(z_n,a_i)\geq r_0$ for some $r_0$ and for every $i$, since $M\setminus \bigcup\limits_{i=1}^p B(a_i, r_0)$ is uniformly discrete then $R(z_n)$ would be bounded from below by a positive constant.

Now we distinguish several cases:

\bigskip

\underline{Case 1}: There is a sequence $\{x_n\}\subseteq M$ of distinct points satisfying that there exists $y_n\in M$ such that $d(x_n,y_n)=R(x_n)$.

\medskip

Observe that in this case, up to removing finitely many terms of the sequence, we can assume $x_n\neq 0$ and $R(x_n)>0$ for every $n\in\mathbb N$.

Consider the following sets:
$$W:=\{\{n,m\}\in [\mathbb N]^2: \{x_n,y_n\}\cap \{x_m,y_m\}=\emptyset\},$$
$$G:=\{\{n,m\}\in [\mathbb N]^2: y_n=y_m\},$$
$$O:=\{\{n,m\}\in [\mathbb N]^2: x_n=y_m\mbox{ or }x_m=y_n\},$$
where for a set $S$ we write $[S]^2$ to denote the family of subsets of $S$ of cardinality $2$.
By Ramsey's Theorem there is an infinite subset $S$ of $\mathbb N$ such that $[S]^2$ is contained in one of the previous sets. Observe that we can discard that this set is $O$ because the only possibility would be that infinitely many $y_n's$ coincide, and then we reduce ourselves to the case that $G$ is the set found by Ramsey's Theorem. Now we have two possibilities:

\medskip

\underline{Case 1, subcase 1:} The conclusion of Ramsey's Theorem applies to $W$.\vspace{0.3cm}

Passing to a subsequence, we can assume that $\{x_n,y_n\}\cap \{x_m,y_m\}=\emptyset$ if $n\neq m$. Moreover, we can assume, up to removing finitely many $y_n$'s, that $R(y_n)>0$ for every $n\in\mathbb N$ and $y_n\neq 0$. In this case, we can apply Lemma \ref{lemma:sucec0attainR(x)} to the sequences $\{x_n\} ,\{y_n\}$ because
$$\frac{d(x_n,y_n)}{R(x_n)+R(y_n)}=\frac{R(x_n)}{R(x_n)+R(y_n)}<1$$
for every $n\in\mathbb N$ since both $R(x_n)$ and $R(y_n)$ are positive.

\medskip

\underline{Case 1, subcase 2}: The conclusion of Ramsey's Theorem applies to $G$.\vspace{0.3cm}

In this case we can assume, up to passing to a subsequence, that $y_n=y_m=y$ holds for every $n\neq m$.

Now a new application of Ramsey's Theorem is needed. To this end, consider
$$V:=\{\{n,m\}\in [\mathbb N]^2: d(x_n,x_m)=d(x_n,y)+d(x_m,y)\}$$
and set
$$Z:=\{\{n,m\}\in [\mathbb N]^2: d(x_n,x_m)<d(x_n,y)+d(x_m,y)\}.$$
Ramsey's Theorem implies that either $V$ or $Z$ contains the set $[S]^2$ for certain infinite set $S\subseteq \mathbb N$. If $[S]^2\subseteq Z$ then we can assume that, up to taking a further subsequence, $d(x_n,x_k)<d(x_n,y)+d(x_k,y)=R(x_n)+R(x_k)$ holds for every $n,k\in \mathbb N, n\neq k$. In this case, we can apply Lemma \ref{lemma:sucec0attainR(x)} to the sequences $\{x_{2n}\}$ and $\{x_{2n+1}\}$.

Thus, to finish the proof of this subcase we only need to show that if $[S]^2\subseteq V$ then $\SA(M)$ contains an isomorphic copy of $c_0$.
Passing to a subsequence if necessary, we can assume that $d(x_n,x_m)=d(x_n,y)+d(x_m,y)$. We will follow a different approach. In this case, given $n\in\mathbb N$, define $f_n:M\longrightarrow \mathbb R$ by the equation $f_n(x_n)=d(x_n,y)$ and $f_n(x)=0$ if $x\neq x_n$. Clearly, $\{f_n\}$ is $1$-equivalent to the $c_0$ basis.
Namely, let $(\lambda_n)\in c_0$ and set $g=\sum_{n=1}^\infty \lambda_n f_n$. Fix $i\in\mathbb N$ such that $\vert \lambda_i\vert:=\max_{n\in\mathbb N}\vert \lambda_n\vert$. Then $$\frac{\vert g(x_i)-g(y)\vert}{d(x_i,y)}=|\lambda_i|,$$
so the Lipschitz norm of $g$ is at least $|\lambda_i|$.
Given $u \in M$, notice that $g(u)\neq0$ if and only if $u \in \{x_n: n\in \mathbb{N}\}$. Thus, if $g(v)=0$ we have that $$\frac{\vert g(x_n)-g(v)\vert}{d(x_n,v)}= |\lambda_n|\frac{d(x_n,y)}{d(x_n,v)} \leq |\lambda_n|.$$
Since
$$\frac{\vert g(x_n)-g(x_m)\vert}{d(x_n,x_m)}\leq |\lambda_i|\frac{d(x_n,y)+d(x_m,y)}{d(x_n,x_m)} = |\lambda_i|,$$
we obtain that $\|g\|=|\lambda_i|$ and that $g\in \SA(M)$ as desired, which concludes the proof of case 1.

\bigskip

\underline{Case 2}: There is a sequence of distinct points $\{x_n\}\subseteq M$ such that, for every $n\in\mathbb N$, $R(x_n)<d(x_n,y)$ for every $y\neq x_n$.
\vspace{0.3cm}

In this case we claim that for every $n\in\mathbb N$ we can find infinitely many $y\in M$ such that $d(x_n,y)< R(x_n)+R(y)$. Fix $n\in\mathbb N$ and assume, by contradiction, that $\{y\in M: d(x_n,y)<R(x_n)+R(y)\}$ were finite. Notice that for every $k\in\mathbb N$, we can find $y_k$ such that $d(x_n,y_k)\leq R(x_n)+\frac{1}{k}$. Observe that we can assume that $y_k\neq y_j$ if $k\neq j$ since the value $R(x_n)$ is never attained, so we can suppose, up to removing finitely many elements, that $R(x_n)+R(y_k)\leq d(x_n,y_k)$ holds for every $k\in\mathbb N$. Thus, $R(x_n)+R(y_k)\leq R(x_n)+\frac{1}{k}$ for every $k$, which implies that $R(y_k)\rightarrow 0$. Passing to a subsequence we can assume that $\{y_k\}$ converges to $a_i$ for some $i$, from where $R(x_n)=\lim_k d(x_n,y_k)= d(x_n,a_i)$, which yields a contradiction since $R(x_n)$ is not attained.

The above claim allows us to construct by an inductive argument a sequence $y_n$ such that the inequality $d(x_n,y_n)<R(x_n)+R(y_n)$ holds for every $n\in\mathbb N$ and so that $\{x_n,y_n\}\cap \{x_m,y_m\}=\emptyset$ if $n\neq m$. Moreover, we can assume that $R(y_n)>0$ up to removing finitely many terms. Now, an application of Lemma \ref{lemma:sucec0attainR(x)} to the sequences $\{x_n\}$ and $\{y_n\}$ finishes the proof of the theorem.
\end{proof}

\begin{remark}
Observe that, as a consequence of the Main Theorem, we reprove the known result that if $M$ is infinite then $\Lip(M)$ contains an isomorphic copy of $c_0$, proved in \cite[Theorem 3.2]{cdw2015}. This result was later improved in \cite[Theorem 5]{CJ2017}, where an isometric version is proved (indeed, it is shown that $\Lip(M)$ contains isometric copies of $\ell_\infty$). We do not know whether an isometric version of the Main Theorem holds, i.e. whether $\SA(M)$ contains an isometric copy of $c_0$ whenever $M$ is an infinite metric (complete) space.
\end{remark}

\section*{Acknowledgements}

Research partially supported by Fundaci\'on S\'eneca - ACyT Regi\'on de Murcia [20797/PI/18]. The first three authors were also supported by MTM2017-86182-P (funded by MCIN/AEI/10.13039/501100011033 and ``ERDF A way of making Europe'').

The research of A. Rueda Zoca was also supported by MCIN/AEI/10.13039/\\ 501100011033 (Spain) Grant PGC2018-093794-B-I00, by Junta de Andaluc\'ia Grant A-FQM-484-UGR18 and by Junta de Andaluc\'ia Grant FQM-0185.

P.~Tradacete was also partially supported by Agencia Estatal de Investigaci\'on (AEI) and Fondo Europeo de Desarrollo Regional (FEDER) through grants PID2020-116398GB-I00 and CEX2019-000904-S funded by MCIN/AEI/ 10.13039/501100011033.

\end{document}